\pdfpagewidth=8.5truein
\pdfpageheight=11truein

\documentclass[12pt]{article}
\usepackage{amssymb, amsmath, amsthm, geometry}
\geometry{letterpaper}
\usepackage{fullpage}

\newcommand{\abs}[1]{\lvert #1 \rvert}

\newtheorem{conjecture}{Conjecture}
\newtheorem{theorem}{Theorem}
\newtheorem{corollary}{Corollary}

\begin{document}

\title{Counterexample to a conjecture of Ziegler concerning
a simple polytope and its dual}

\author{\sc{William Gustafson}}

\maketitle

\begin{abstract}
Problem~4.19 in Ziegler~\cite{Ziegler}
asserts that every simple $3$-dimensional polytope
has the property
that its dual can be constructed
as the convex hull of a subset of the vertices
of the original simple polytope.
In this note we state a higher-dimensional analogue of this
conjecture and provide a family of counterexamples for
dimension $d \geq 3$.
\end{abstract}

\section*{Extended conjecture and counterexamples}

We begin with the following conjecture. We then provide
an infinite number of counterexamples.

\begin{conjecture}
\label{conjecture_higher}
Let $P$ be a simple polytope of dimension greater than or equal to $3$.
Then there exists a subset $S$ of the vertices of $P$ such that
that convex hull of $S$ has the same combinatorial type
as the dual polytope $P^*$.
\end{conjecture}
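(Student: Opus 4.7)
Since the abstract announces counterexamples, my ``proof'' is really a disproof: the task is to exhibit, for each $d \ge 3$, a simple $d$-polytope $P$ such that no subset $S \subseteq V(P)$ has convex hull combinatorially equivalent to $P^*$. The combinatorial setup is rigid. Because $P^*$ is simplicial with $f_{d-1}(P)$ vertices and $f_0(P)$ facets, the cardinality $\abs{S}$ must equal $f_{d-1}(P)$, and every facet of $\mathrm{conv}(S)$ must be a combinatorial simplex whose vertex--facet incidences reproduce those of $P^*$. This already suggests looking at simple polytopes whose vertex set is forced by geometry to contain large coplanar subsets that can only support non-simplicial facets.

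Concretely, I would first try candidates built from products, such as prisms $I \times Q$ and their iterates, since these are simple whenever $Q$ is, and their vertices lie on a small number of parallel hyperplanes. In such a prism, any subset $S$ whose convex hull reproduces the full simplicial structure of $(I \times Q)^*$ must distribute its vertices across the two parallel copies of $Q$, and I would quantify the resulting constraint using an incidence count---for instance, the number of simplicial facets of $P^*$ through a fixed vertex. The hope is to find a base polytope $Q$ for which this count forces a configuration of points on one of the parallel hyperplanes that cannot be in convex position once the remaining chosen points are added, or for which the required combinatorial type of $\mathrm{conv}(S)$ demands a ``generic'' position that $V(P)$ cannot supply.

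The main obstacle is ruling out \emph{every} possible subset $S$, not merely the most symmetric one. I expect the cleanest route is to extract a combinatorial invariant of $P^*$---such as the maximum number of vertices of $P^*$ lying on a common facet-defining hyperplane, or a forbidden subgraph in the edge graph---and to compare it against what the vertex set of $P$ is intrinsically capable of producing. Once such an obstruction is isolated in dimension $3$, the higher-dimensional cases should follow by iterating the prism (or pyramid) construction over the three-dimensional counterexample, since these operations preserve simplicity and inherit the geometric rigidity of the vertex set.
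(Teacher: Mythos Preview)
Your strategy is essentially the paper's. The counterexamples are indeed iterated prisms $Q = P \times [0,1]^d$ with $P$ an $n$-gon and $n$ large enough; for dimension~$3$ one takes $d=1$ and $n \ge 5$. Your proposed invariant---the maximum number of vertices of the dual that can lie on a common supporting hyperplane of the primal---is exactly the operative one. The vertex set of $Q$ partitions into $2^d$ parallel copies of $P$ (the $2$-faces $P \times \{v\}$ for $v$ a cube vertex), and pigeonhole forces any candidate $S$ with $\abs{S} = n + 2d$ to meet some copy $Q_k$ in at least $\lceil (n+2d)/2^d \rceil$ points. A supporting hyperplane of $Q$ at this face is also supporting for $\operatorname{conv}(S)$, so $S \cap Q_k$ spans a face of $\operatorname{conv}(S)\cong Q^*$ of dimension at most~$2$.

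What your sketch does not yet supply is the bound on how many vertices such a low-dimensional face of $Q^*$ can have, and here the bipyramid structure of the dual---which you do not mention---is what closes the argument. Since $Q^*$ is the $d$-fold iterated bipyramid over the $n$-gon $P^*$, one separates $S$ into the $n$ vertices $T$ spanning the base $P^*$ and the $2d$ successive apices $U$; a short general-position check shows that a $2$-dimensional face of $Q^*$ contains at most one apex, hence $\abs{T \cap Q_k} \ge \lceil (n+2d)/2^d \rceil - 1 \ge 3$. But $T \cap Q_k$ then lies in a proper face of the $n$-gon $P^*$, which has at most two vertices---the contradiction. Your alternative proposed obstructions (failure of convex position, forbidden edge-subgraphs) are red herrings here; the argument is a pure face-size count, and once you recognise the bipyramid description of $Q^*$ it goes through without them.
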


Ziegler conjectured this result in the
case of $3$-dimensional
simple polytopes; see~\cite[Exercise~4.19]{Ziegler}.
Note that the conjecture is true in dimensions at most $2$
and is immediately true for any $d$-dimensional simplex.
In the case of dimension $3$, 
Andreas Paffenholz, in unpublished work, verified the conjecture
for the truncated tetrahedron, the trucated cube and
the truncated cross-polytope by giving an explicit realization.
It is a straightforward matter to verify 
the conjecture holds for any
$d$-dimensional cube.

We will show that Conjecture~\ref{conjecture_higher} 
is false in all dimensions greater than or equal
to $3$. 	

\begin{theorem}
\label{theorem}
Let $d$ be a positive integer. Let $P$ be a $d'$-dimensional polytope with $d' \ge 2$ and $n$ facets 
such that
every vertex is incident with at most $\left\lceil (n+2d)/2^d\right\rceil-d'$
facets. If $Q$ is the Cartesian product of $P$ with the $d$-dimensional cube
then there is no subset $S$ of the vertices of $Q$
such that the convex hull of $S$ is combinatorially equivalent to the dual polytope
$Q^*$. 
\end{theorem}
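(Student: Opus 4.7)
The plan is to argue by contradiction: suppose there exists $S \subseteq V(Q)$ such that $\mathrm{conv}(S) \cong Q^*$ combinatorially. Since $Q^*$ has $f_{d+d'-1}(Q) = n + 2d$ vertices, we may take $|S| = n + 2d$ with each element of $S$ a vertex of $\mathrm{conv}(S)$. Every vertex of $Q = P \times C^d$ factors uniquely as $(v, w)$ with $v \in V(P)$ and $w \in V(C^d)$, so I partition $S$ by its second coordinate. Since $|V(C^d)| = 2^d$, the pigeonhole principle yields a vertex $t^* \in V(C^d)$ with fiber $S_{t^*} := \{v \in V(P) : (v, t^*) \in S\}$ of size at least $\lceil (n+2d)/2^d \rceil$.

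The next step is to show that $\mathrm{conv}(S_{t^*} \times \{t^*\})$ is itself a face of $\mathrm{conv}(S)$. The set $P \times \{t^*\}$ is a face of $Q$, realized as the intersection of the $d$ facets $P \times F$ as $F$ ranges over facets of $C^d$ incident to $t^*$; each of the corresponding supporting hyperplanes also supports $\mathrm{conv}(S) \subseteq Q$, and intersecting them iteratively with $\mathrm{conv}(S)$ cuts out exactly $\mathrm{conv}(S \cap (P \times \{t^*\})) = \mathrm{conv}(S_{t^*} \times \{t^*\})$. This face has $|S_{t^*}|$ vertices and dimension at most $d'$. Transporting it through the combinatorial equivalence $\mathrm{conv}(S) \cong Q^*$ and then through the duality between $Q$ and $Q^*$ produces a face $G$ of $Q$ of dimension at least $d - 1$ for which the number of facets of $Q$ incident to $G$ equals $|S_{t^*}|$, and hence is at least $\lceil (n+2d)/2^d \rceil$.

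Now I exploit the product structure of $Q$. Write $G = G_P \times G_C$ with $\dim G_P = j$ and $\dim G_C = k$, so $j + k \ge d - 1$. Facets of $Q$ come in two families, $F_P \times C^d$ and $P \times F_C$, so the number of facets of $Q$ containing $G$ equals the number of facets of $P$ containing $G_P$ plus the number of facets of $C^d$ containing $G_C$; the latter equals $d - k$ by simplicity of $C^d$. Set $m := \lceil (n+2d)/2^d \rceil - d'$ and let $a$ denote the number of facets of $P$ containing $G_P$. When $G_P = P$ we have $a = 0$; otherwise, applying the hypothesis to any vertex of $G_P$ (together with the fact that every facet through $G_P$ passes through all its vertices) yields $a \le m$. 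Observe also that $m \ge d'$, since every vertex of a $d'$-polytope lies in at least $d'$ facets.

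The contradiction now comes from combining two inequalities. The facet-count bound is $a + (d - k) \ge m + d'$, and the dimension constraint is $j + k \ge d - 1$; adding these yields $a + j \ge m + d' - 1$. When $G_P = P$ we have $a = 0$ and $j = d'$, which directly forces $m \le 1$. When $G_P \ne P$ we have $a \le m$ and $j \le d' - 1$, so equality must hold throughout, forcing $a = m$ and $j = d' - 1$; but $G_P$ is then a facet of $P$, so $a = 1$, giving $m = 1$. Either way $m \le 1$, contradicting $m \ge d' \ge 2$. The hard part will be justifying the face-transport step cleanly, since it converts the geometric constraint $S \subseteq V(Q)$ into the combinatorial incidence statement that the hypothesis of the theorem was designed to rule out; the remaining numerology is essentially bookkeeping.
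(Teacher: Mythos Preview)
Your argument is correct, and it takes a genuinely different route from the paper's.

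Both proofs share the opening move: pigeonhole on the $2^d$ copies of $P$ inside $Q$ to find a fiber $P\times\{t^*\}$ meeting $S$ in at least $\lceil(n+2d)/2^d\rceil$ points, and the observation that this fiber cuts out a face of $\mathrm{conv}(S)$ of dimension at most $d'$. From there the paper stays inside $Q^*$, using its description as a $d$-fold iterated bipyramid over $P^*$: it splits $S$ into the $n$ ``base'' vertices $T$ (spanning a copy of $P^*$) and the $2d$ bipyramid apices $U$, then argues via a general-position claim that at most $d'-1$ apices can land in the chosen fiber, forcing $T$ to contribute too many vertices to a proper face of $P^*$. You instead dualize immediately: the face of $\mathrm{conv}(S)\cong Q^*$ becomes a face $G$ of $Q$ of dimension at least $d-1$ contained in at least $\lceil(n+2d)/2^d\rceil$ facets, and you analyze $G=G_P\times G_C$ through the product structure of $Q$, reducing everything to the two inequalities $a+(d-k)\ge m+d'$ and $j+k\ge d-1$.

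Your approach has the advantage of replacing the paper's somewhat informal general-position discussion of the apex vertices by a clean incidence count in the product (the identity that a $k$-face of $C^d$ lies in exactly $d-k$ facets does all the work that the apex bookkeeping does in the paper). The paper's approach, on the other hand, makes the role of the embedded copy of $P^*$ inside $Q^*$ more visible, which is perhaps more suggestive of where the obstruction comes from. Either way the contradiction lands on the same hypothesis about vertex--facet incidences in $P$.
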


	\begin{proof}
		Suppose on the contrary that $S$ is a subset of the vertices
		of the polytope $Q$ satisfying the convex hull of $S$
		is combinatorially equivalent to the dual polytope $Q^*$.
		Observe that the dual polytope $Q^*$
		is combinatorially equivalent to the $d$
		times iterated bipyramid over $P^*$
		and thus has $n+2d$ vertices.
		The vertices of $S$ can then be divided into disjoint subsets
		$T$ and $U$, with $\abs{T} = n$, and $\abs{U} = 2d$
		so that the convex hull of $T$ is combinatorially equivalent to
		$P^*$. The vertices of $U$
		correspond to the $2d$ vertices
		created when taking $d$ iterated bipyramids over $P^*$
		to create $Q^*$.

		Since $Q$ is formed by a Cartesian product,
		the vertices $V(Q)$ of $Q$ can be partitioned into $2^d$
		disjoint sets, say $V(Q) = \bigcup_{i=1}^{2^d}Q_i$.
		Where the convex hull of the vertices in $Q_i$ is combinatorially
		equivalent to a copy of the original polytope $P$
		for $i=1,\dots,2^d$. By the pigeonhole principle, of the $n+2d$
		vertices selected to form the set $S$, there is at least one
		set of the vertex partition of $V(Q)$, say $Q_k$, that contains
		at least $\left\lceil (n+2d)/2^d \right \rceil$ vertices of $S$.

		Let $H$ be a supporting hyperplane for $Q$ at $Q_k$
		which contains no other vertices of $Q$.
		This is also a supporting hyperplane for $S$,
		so the convex hull of the vertices in $S\cap Q_k$ forms a face of 
		$Q^*$, which is at most dimension $d'$.

		By hypothesis each vertex of $P$ is incident with at most
		$\left\lceil (n+2d)/2^d\right\rceil-d'$
		facets. Any vertex of $P$ is in at least $d'$ facets, so
		$\left\lceil (n+2d)/2^d\right\rceil-d' \ge d'$.
		This implies $\left\lceil(n+2d)/2^d\right\rceil \ge 2d'$,
		so $S\cap Q_k$ contains at least $2d'$ vertices.
		We claim at most $d'-1$ of these vertices are
		from $U$. 
		To see this, note that since $S\cap Q_k$
		is the set of vertices of a face of the convex
		hull of $S$,
		it consists of some (possibly 
		empty) set of vertices from $T$, and
		some (possibly empty) set of vertices from $U$.
		These vertices from $U$ lie in general
		position to one another and are additionally in general position
		with respect to $T$. Thus the fact that the convex hull of $S\cap Q_k$ 
		has dimension at most
		$d'$ yields that $\abs{U\cap Q_k} \le d'+1$.

		If $\abs{U\cap Q_k} = d'+1$ then $\abs{T\cap Q_k} \ge d'-1 \ge 1$,
		hence $S\cap Q_k$ contains $d'+2$ vertices in general position,
		so the convex hull has dimension greater then $d'$, a contradiction.
		Similarly if $\abs{U\cap Q_k} = d'$ then $\abs{T\cap Q_k}\ge d'\ge 2$,
		and once again $S\cap Q_k$ would contain $d'+2$ vertices in general
		position.
		Therefore $\abs{U\cap Q_k} \le d'-1$,
		hence $\abs{T\cap Q_k} \ge \left\lceil (n+2d)/2^d\right\rceil -d'+1$.
		Since these vertices are in $T$ and in a face of the convex
		hull of $S$, the convex hull of $T\cap Q_k$ forms
		a proper face of $P^*$. Therefore
		$P^*$ has a facet with at least 
		$\left\lceil (n+2d)/2^d\right\rceil -d'+1$ vertices,
		and thus $P$ has a vertex incident with the same
		number of facets, contrary to assumption.
		Hence there is no such subset $S$ of the vertices
		of $Q$ such that the convex hull of $S$ is combinatorially
		equivalent to $Q^*$.
	\end{proof}

\begin{corollary}
Let $P$ be an $n$-gon with $n \ge 3\cdot 2^d - 2d+1$.
Let $Q$ be the simple polytope formed by taking the Cartesian
product of $P$ with the $d$-dimensional cube. Then there is no subset
$S$ of the vertices of $Q$ where the convex hull of $S$ has the same
combinatorial type as the dual polytope $Q^*$.
\end{corollary}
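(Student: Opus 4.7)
The plan is to verify that an $n$-gon with $n \geq 3 \cdot 2^d - 2d + 1$ satisfies the hypotheses of Theorem~\ref{theorem}, then invoke that theorem directly. So I would begin by setting $P$ to be the $n$-gon and noting that $P$ has dimension $d' = 2$ with exactly $n$ facets (its edges), and every vertex of $P$ is incident with exactly $2$ facets.

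Next I would translate the hypothesis of Theorem~\ref{theorem} into a concrete inequality in this setting. With $d' = 2$, the required bound is that every vertex lies in at most $\lceil (n+2d)/2^d \rceil - 2$ facets. Since each vertex of the $n$-gon lies in exactly $2$ facets, it suffices to show
\[
2 \;\leq\; \left\lceil \frac{n+2d}{2^d} \right\rceil - 2,
\]
i.e., $\lceil (n+2d)/2^d \rceil \geq 4$. The routine step is to check that this ceiling inequality is equivalent (for integer $n$) to $n + 2d > 3 \cdot 2^d$, which is exactly $n \geq 3\cdot 2^d - 2d + 1$, the hypothesis of the corollary.

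Once the hypothesis is verified, Theorem~\ref{theorem} applied to this $P$ gives that the Cartesian product $Q = P \times [0,1]^d$ has no subset $S$ of vertices whose convex hull is combinatorially equivalent to $Q^*$, which is exactly the conclusion. There is no real obstacle here; the only thing to be careful about is the ceiling calculation (making sure the inequality is strict in the right direction) and noting that $d' = 2 \geq 2$, so Theorem~\ref{theorem} does apply.
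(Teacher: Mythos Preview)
Your proposal is correct and follows essentially the same route as the paper's own proof: set $d'=2$, note that every vertex of the $n$-gon lies in exactly $2$ facets, reduce the hypothesis of Theorem~\ref{theorem} to the inequality $\lceil (n+2d)/2^d \rceil \ge 4$, and verify that this is equivalent to $n \ge 3\cdot 2^d - 2d + 1$. The only difference is that you spell out the ceiling manipulation a bit more explicitly than the paper does.
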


	\begin{proof}
		Observe that $P$ satisfies the hypothesis of 
		Theorem~\ref{theorem} as $d'=2$
		and $n \ge 3\cdot 2^d - 2d+1$ implies
		$\left\lceil (n+2d)/2^d\right\rceil - d' \ge 2$.
		Each vertex of $P$ is incident with at most 2 facets,
		so indeed by Theorem~\ref{theorem} there is
		no subset $S$ of the vertices of $Q$ where the
		convex hull of $S$ has the same combinatorial type
		as the dual polytope $Q^*$.
	\end{proof}

\begin{conjecture}
	The truncated $d$-simplex, the truncated $d$-cube and the truncated $d$-cross-polytope
	for $d\ge 4$ are families of polytopes where Conjecture~\ref{conjecture_higher} holds.
\end{conjecture}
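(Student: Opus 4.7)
The plan is to construct, for each of the three families and each dimension $d\ge 4$, an explicit subset $S$ of vertices whose convex hull realizes the combinatorial type of the dual. The first step is to identify the target. Since truncating every vertex of a simple polytope $P$ dualizes to stacking a pyramid on every facet of $P^*$, the dual of the truncated $d$-simplex is the Kleetope of $\Delta^d$, with $2(d+1)$ vertices, and the dual of the truncated $d$-cube is the Kleetope of the $d$-cross-polytope, with $2^d + 2d$ vertices. For the truncated $d$-cross-polytope some additional care is required, since the $d$-cross-polytope is not simple for $d\ge 3$; indeed, a naive vertex truncation of the $d$-cross-polytope is not itself simple for $d\ge 4$, so one must first specify the intended simple truncation (for example an omnitruncation, or a multi-step truncation used to generate a simple analogue of the $3$-dimensional truncated octahedron) and compute its dual from facet data before proceeding.

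Next I would exploit the large symmetry groups --- the symmetric group $S_{d+1}$ for the truncated simplex and the hyperoctahedral group $B_d$ for the other two --- and look for symmetric subsets $S$ whose orbit decomposition matches that of the target vertex set. Working with the standard coordinate realizations (the truncated $d$-simplex has vertices $(1-t)e_i + t e_j$ for $i\ne j$, and the truncated $d$-cube has vertices with one coordinate in $\{\pm(1-t)\}$ and the rest in $\{\pm 1\}$), the natural candidate consists of one vertex chosen from each vertex-truncation simplex, supplying the $d+1$ (respectively $2^d$) stacked apices of the Kleetope, together with one vertex chosen symmetrically from each ``big'' truncated facet, supplying the $d+1$ (respectively $2d$) base vertices. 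Paffenholz's explicit realizations in dimension three should pin down a canonical such choice and suggest an inductive construction in which the portion of $S$ lying on each big facet already realizes the dual of that facet in the lower-dimensional case.

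The hard part will be the combinatorial verification. For every face of the target Kleetope one must exhibit a supporting hyperplane of $\mathrm{conv}(S)$ that cuts out precisely the corresponding subset of $S$, and simultaneously one must rule out any spurious face coming from an unintended coplanarity of points in $S$. Because the truncation parameter $t$ is free, a reasonable strategy is to treat $t$ as a symbolic variable and verify face and non-face relations generically, using the symmetry to reduce the check to a manageable set of orbit representatives and an inductive argument from the facet realizations. As $d$ grows the number of orbit types of faces grows too, so the delicate point is showing that no large subset of $S$ becomes accidentally coplanar as $t$ varies; the truncated cross-polytope family should be the subtlest case, since fixing the correct simple truncation and computing its dual in closed form is itself part of the problem.
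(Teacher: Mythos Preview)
The statement you are attempting to prove is labeled a \emph{conjecture} in the paper, and the paper gives no proof of it whatsoever; it is posed as an open problem immediately after the main counterexample construction. There is therefore no ``paper's own proof'' for your attempt to be compared against. What you have written is not a proof either, but a sketch of a possible strategy for attacking an open question: you identify the target duals as Kleetopes, propose exploiting the $S_{d+1}$ and $B_d$ symmetries to single out candidate vertex subsets, and defer the actual face-lattice verification to an unspecified inductive or generic-parameter argument. None of the verification is carried out, so as a proof the proposal is incomplete by design.

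One substantive observation in your sketch deserves to be flagged, because it points to a genuine issue with the conjecture as stated rather than with your approach. You correctly note that the $d$-cross-polytope is not simple for $d\ge 3$, and that a plain vertex truncation of it is not simple for $d\ge 4$ (each new vertex lies on $1+2^{d-2}$ facets). Since Conjecture~\ref{conjecture_higher} concerns simple polytopes, the phrase ``truncated $d$-cross-polytope'' in the conjecture is ambiguous for $d\ge 4$: either the author intends some specific simple truncation (e.g.\ the omnitruncate, i.e.\ the $B_d$-permutohedron), or the family must be reinterpreted before the statement even parses. Resolving that ambiguity is a prerequisite to any proof attempt for the third family, and the paper does not address it.
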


\section*{Acknowledgements}
The author thanks Richard Ehrenborg and Margaret Readdy
for inspiring conversations
and comments on an earlier draft,
and G\'abor Hetyei for comments.

\newcommand{\bookf}[5]{{\sc #1,} ``#2,'' #3, #4, #5.}

\small

\noindent
{\sc University of Kentucky,
Department of Mathematics,
Lexington, KY 40506.} \hfill\break
{\tt william.gustafson@uky.edu}.

\end{document}